\newcommand{\bq}{\begin{equation}}
\newcommand{\eq}{\end{equation}}
\newcommand{\bc}{\begin{center}}
\newcommand{\ec}{\end{center}}
\newcommand{\bit}{\begin{itemize}}
\newcommand{\eit}{\end{itemize}}
\newcommand{\ben}{\begin{enumerate}}
\newcommand{\een}{\end{enumerate}}
\theoremstyle{plain}
\newtheorem{theorem}{Theorem}[section]
\newtheorem*{theorem*}{Theorem}
\newtheorem{proposition}[theorem]{Proposition}
\newtheorem{lemma}[theorem]{Lemma}
\newtheorem{remark}[theorem]{Remark}
\newtheorem{definition}[theorem]{Definition}
\newtheorem{conjecture}[theorem]{Conjecture}
\begin{document}

\journal{(internal report CC23-5)}

\begin{frontmatter}

\title{First double mode of the Poisson distribution of order $k$}

\author[cc]{S.~R.~Mane}
\ead{srmane001@gmail.com}
\address[cc]{Convergent Computing Inc., P.~O.~Box 561, Shoreham, NY 11786, USA}

\begin{abstract}
The Poisson distribution of order $k$ is a special case of a compound Poisson distribution.
For $k=1$ it is the standard Poisson distribution.
Our focus in this note is for $k\ge2$.
For sufficiently small values of the rate parameter $\lambda$, both the median and mode equal zero.
The median is zero if and only if $\lambda \le (\ln2)/k$.
The supremum value of $\lambda$ for the mode to be zero is known only for small values of $k$.
This note presents results for the ``first double mode''
by which is meant the first occasion (smallest value of $\lambda$) the distribution is bimodal, with modes at $0$ and $m>0$.
Next, an almost complete answer is supplied to the question ``which positive integers cannot be modes of the Poisson distribution of order $k$?''
The term ``almost complete'' signifies that some parts of the answer are conjectures based on numerical searches.
However, if the conjectures are proved to be correct, the solution presented in this note is complete: all parameter values are covered.
\end{abstract}

\vskip 0.25in

\begin{keyword}
Poisson distribution of order $k$
\sep mode
\sep recurrence relations
\sep Compound Poisson distribution  
\sep discrete distribution 

\MSC[2020]{
60E05  
\sep 39B05 
\sep 11B37  
\sep 05-08  
}


\end{keyword}

\end{frontmatter}

\newpage
\setcounter{equation}{0}
\section{\label{sec:intro} Introduction}
In a recent note \cite{Mane_Poisson_k_CC23_3},
the author presented numerical solutions for asymptotic results for the Poisson distribution of order $k$ \cite{PhilippouGeorghiouPhilippou}.
It is a variant (or extension) of the well-known Poisson distribution.
We begin with its formal definition. 
\begin{definition}
  \label{def:pmf_Poisson_order_k}
  The Poisson distribution of order $k$ (where $k\ge1$ is an integer) and parameter $\lambda > 0$
  is an integer-valued statistical distribution with the probability mass function (pmf)
\bq
\label{eq:pmf_Poisson_order_k}
f_k(n;\lambda) = e^{-k\lambda}\sum_{n_1+2n_2+\dots+kn_k=n} \frac{\lambda^{n_1+\dots+n_k}}{n_1!\dots n_k!} \,, \qquad n=0,1,2\dots
\eq
\end{definition}
\noindent
For $k=1$ it is the standard Poisson distribution.
The Poisson distribution of order $k$ is a special case of the compound Poisson distribution introduced by Adelson \cite{Adelson1966}.
Although exact expressions for the mean and variance of the Poisson distribution of order $k$ are known \cite{PhilippouMeanVar},
exact results for its median and mode are difficult to obtain.
For sufficiently small values of the rate parameter $\lambda$, both the median and mode equal zero.
It was derived in \cite{Mane_Poisson_k_CC23_3} that the median is zero if and only if $\lambda \le (\ln2)/k$.
The supremum value of $\lambda$ for the mode to be zero is known only for small values of $k$ \cite{PhilippouFibQ,KwonPhilippou}.
This note presents results for the ``first double mode''
by which is meant the first occasion (smallest value of $\lambda$) the distribution is bimodal, with modes at $0$ and $m>0$.

In addition, Kwon and Philippou \cite{KwonPhilippou} posed the following question at the end of their paper:
``What positive integers cannot be modes of the Poisson distribution of order $k$?''
This note presents an almost complete answer to the above question.
The qualifier ``almost complete'' signifies that some parts of the answer are conjectures based on numerical studies.
However, if the conjectures are proved to be correct, the solution presented in this note is complete: all parameter values are covered.

The structure of this paper is as follows.
Sec.~\ref{sec:notation} presents basic definitions and notation employed in this note.
Sec.~\ref{sec:KPLemmas} lists lemmas by Kwon and Philippou \cite{KwonPhilippou}, reproduced for ease of reference.
Sec.~\ref{sec:firstdoublemode} presents results for the double mode.
Sec.~\ref{sec:notmode} presents the ``almost complete'' answer mentioned above.
Sec.~\ref{sec:conc} concludes.

\newpage
\setcounter{equation}{0}
\section{\label{sec:notation}Basic notation and definitions}
For later reference we define the parameter $\kappa=k(k+1)/2$.
We denote the mode $m$ (with pertinent subscripts, etc.~to denote the dependence on $k$ and $\lambda$, see below).
The mode is defined as the location(s) of the {\em global maximum} of the probability mass function.
It is known that the mode may not be unique.
For the standard Poisson distribution with parameter $\lambda$, the mode equals $\lfloor\lambda\rfloor$ if $\lambda\not\in\mathbb{N}$,
but both $\lambda-1$ and $\lambda$ are modes if $\lambda\in\mathbb{N}$.
Kwon and Philippou \cite{KwonPhilippou} published a table of values of $\lambda$ where the Poisson distribution of order $k$ has a double mode,
for $2 \le k \le 4$ and $0 < \lambda \le 2$.

We study the problem of the ``first double mode''
by which is meant the first occasion (smallest value of $\lambda$) the distribution is bimodal, with modes at $0$ and $m>0$.
Philippou \cite{PhilippouFibQ} showed that for $k=2$, the answer is $\lambda=\sqrt{3}-1$.
Kwon and Philippou \cite{KwonPhilippou} published the answers for $k=3$ and $4$ (roots of cubic and quartic polynomials, respectively).
This note studies the general case $k\ge2$.
We follow the notation in \cite{KwonPhilippou} and
define $h_k(n;\lambda) = e^{k\lambda}f_k(n;\lambda)$, with initial value $h_k(0;\lambda) = 1$ (eq.~(6) in \cite{KwonPhilippou}).
Observe from eq.~\eqref{eq:pmf_Poisson_order_k} that $h_k(n;\lambda)$ is a polynomial in $\lambda$.
Note the following:
\begin{enumerate}
\item
  The polynomial $h_k(n;\lambda)$ has degree $n$ because the highest power of $\lambda$ in
  the sum in eq.~\eqref{eq:pmf_Poisson_order_k} is given by the tuple $(n_1,\dots,n_k)=(n,0,\dots,0)$.
  The resulting term in the sum is $\lambda^n/n!$.
\item
  Next, $h_k(0;\lambda)=1$ and $h_k(n;\lambda)$ has no constant term for $n>0$.
\item
  All the coefficients in $h_k(n;\lambda)$ are positive.
\item
  Hence for $n>0$, $h_k(n;0)=0$ and $h_k(n;\lambda)$ is strictly increasing in $\lambda$ for $\lambda\ge0$.
\end{enumerate}
Note also that for fixed $\lambda$ and $n\ge0$, the polynomials $h_k(n;\lambda)$ are identical for all $k\ge n$.
\begin{proposition}\label{prop:hk_same_k_ge_n}
  For fixed $\lambda$ and $n\ge0$, $h_k(n;\lambda) = h_n(n;\lambda)$ for all $k > n$.
\end{proposition}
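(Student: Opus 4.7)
The plan is to unwind the definition of $h_k(n;\lambda) = e^{k\lambda}f_k(n;\lambda)$ using equation \eqref{eq:pmf_Poisson_order_k}, which gives
\[
h_k(n;\lambda) \;=\; \sum_{n_1+2n_2+\dots+kn_k=n} \frac{\lambda^{n_1+\dots+n_k}}{n_1!\dots n_k!}\,,
\]
and then argue that the extra summation indices $n_{n+1},\dots,n_k$ introduced when $k>n$ contribute nothing new, because they are forced to be zero by the constraint.

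The key observation is the following: for any tuple $(n_1,\dots,n_k)$ of non-negative integers satisfying $n_1+2n_2+\dots+kn_k=n$, if some index $j$ with $j>n$ has $n_j\ge 1$, then $jn_j\ge j>n$, which already exceeds the required total, a contradiction since all $i n_i\ge0$. Hence every tuple indexing the sum must satisfy $n_{n+1}=n_{n+2}=\cdots=n_k=0$.

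Therefore, when $k>n$, the summation collapses to exactly the set of tuples $(n_1,\dots,n_n)$ with $n_1+2n_2+\dots+nn_n=n$, and the summand depends only on $n_1,\dots,n_n$ (the factorial factors $n_{n+1}!\dots n_k! = 1$ disappear and the $\lambda$-exponent $n_1+\dots+n_k$ reduces to $n_1+\dots+n_n$). This is precisely the defining sum for $h_n(n;\lambda)$, so $h_k(n;\lambda)=h_n(n;\lambda)$.

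There is really no obstacle here: the proof is a one-line combinatorial observation about which index tuples survive the constraint. The only thing worth being careful about is making explicit that zero values of the $n_j$'s contribute trivially to both the factorial product and the exponent of $\lambda$, so dropping them leaves the summand unchanged.
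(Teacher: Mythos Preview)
Your proof is correct and follows exactly the same approach as the paper: observe that the constraint $n_1+2n_2+\dots+kn_k=n$ forces $n_j=0$ for all $j>n$, so the two sums range over the same tuples and have identical summands. You are simply more explicit than the paper about why the summand is unchanged (the extra $0!$ factors and zero contributions to the exponent), which is a welcome clarification.
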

\begin{proof}
  If $n=0$ all the polynomials are identically $1$. Hence assume $n>0$.
  Observe from eq.~\eqref{eq:pmf_Poisson_order_k} that if $k > n$ then $x_j=0$ for $j>n$.
  Hence the sums for $h_k(n;\lambda)$ and $h_n(n;\lambda)$ contain the same tuples.
\end{proof}
\noindent
Note also for fixed $\lambda>0$ and $n\ge2$, the polynomials $h_k(n;\lambda)$ form an increasing sequence for $k=1,\dots,n$.
\begin{proposition}\label{prop:hk_inc_seq_k_le_n}
  For fixed $\lambda>0$ and $n\ge2$, we have the following sequence for $k=1,\dots,n$:
\bq
\label{eq:hk_inc_seq_k_le_n}
\frac{\lambda^n}{n!} = h_1(n;\lambda) < \dots < h_n(n;\lambda) \,.
\eq
\end{proposition}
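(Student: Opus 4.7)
The plan is to establish the chain by proving, for each $1 \le k \le n-1$, the strict inequality $h_k(n;\lambda) < h_{k+1}(n;\lambda)$, and by evaluating $h_1(n;\lambda)$ directly. The whole argument is combinatorial: it compares the index sets of the two sums and exhibits an explicit partition of $n$ that is present in one but not the other.

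First I would note that every $k$-tuple $(n_1,\dots,n_k)$ summed in $h_k(n;\lambda)$ corresponds to a $(k+1)$-tuple $(n_1,\dots,n_k,0)$ summed in $h_{k+1}(n;\lambda)$, with identical summand (the extra $n_{k+1}=0$ contributes a factor $0!=1$ in the denominator and no extra $\lambda$). Hence the index set of the sum defining $h_{k+1}(n;\lambda)$ contains a copy of that for $h_k(n;\lambda)$, and since every term is positive we immediately get $h_k(n;\lambda) \le h_{k+1}(n;\lambda)$. To upgrade this to strict inequality I would exhibit the specific tuple $(n-k-1,0,\dots,0,1)$ with the trailing $1$ in slot $k+1$: the weighted sum is $(n-k-1)+(k+1)=n$, all entries are nonnegative because $k\le n-1$, and $n_{k+1}=1\ne 0$, so this tuple appears in $h_{k+1}(n;\lambda)$ but cannot appear in $h_k(n;\lambda)$. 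Its contribution $\lambda^{n-k}/(n-k-1)!$ is strictly positive for $\lambda>0$, which yields $h_k(n;\lambda) < h_{k+1}(n;\lambda)$.

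Finally, the endpoint $h_1(n;\lambda) = \lambda^n/n!$ follows because the only $1$-tuple satisfying $n_1=n$ is $(n)$ itself. Chaining the $n-1$ strict inequalities then gives the sequence in \eqref{eq:hk_inc_seq_k_le_n}. I do not anticipate any real obstacle here; the argument reduces to a single routine verification that the exhibited partition of $n$ lies in the index set for $h_{k+1}$ but not for $h_k$, together with the positivity of each summand, both of which are immediate from the definition in eq.~\eqref{eq:pmf_Poisson_order_k}.
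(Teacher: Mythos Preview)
Your proof is correct. The paper's proof rests on the same observation---that the index set for $h_{k}(n;\lambda)$ (with $k\le n$) embeds into that for $h_{k+1}(n;\lambda)$ by appending a zero, with strictly more positive terms left over---but it packages this as a recurrence: conditioning on the value of the last coordinate $n_k$ yields
\[
h_k(n;\lambda) = h_{k-1}(n;\lambda) + \sum_{n_k=1}^{\lfloor n/k\rfloor} \frac{\lambda^{n_k}}{n_k!}\,h_{k-1}(n-kn_k;\lambda),
\]
and the nonempty positive sum gives the strict inequality. Your version instead exhibits a single explicit extra tuple $(n-k-1,0,\dots,0,1)$, which is enough for the proposition and arguably cleaner; the paper's formulation has the side benefit of producing the recurrence itself (recorded there as an alternative way to compute $h_k$ from the $h_{k-1}$). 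Mathematically the two arguments are the same idea.
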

\begin{proof}
  The cases $n=0$ and $n=1$ are vacuous (no sequence), hence we must have $n\ge2$.
  Fix a value $k\le n$.
  Then the terms in the sum in eq.~\eqref{eq:pmf_Poisson_order_k} contain exactly $k$ summands.
  By conditioning on the value of $n_k$, we obtain the following sum
\bq
\label{eq:rec_poly}
  h_k(n;\lambda) = \sum_{n_k=0}^{\lfloor(n/k)\rfloor} \frac{\lambda^{n_k}}{n_k!}\,h_{k-1}(n-kn_k;\lambda) \,.
\eq
Split off the term $n_k=0$ to obtain
\bq
h_k(n;\lambda) = h_{k-1}(n;\lambda) + \sum_{n_k=1}^{\lfloor(n/k)\rfloor} \frac{\lambda^{n_k}}{n_k!}\,h_{k-1}(n-kn_k;\lambda) \,.
\eq
The sum is nonempty because $k\le n$ and its value is positive for $\lambda>0$,
hence $h_k(n;\lambda) > h_{k-1}(n;\lambda)$.
The starting value of the sequence is $h_1(n;\lambda) = \lambda^n/n!$.
\end{proof}
\begin{remark}
  Observe that eq.~\eqref{eq:rec_poly} furnishes an alternative recurrence for $h_k(n;\lambda)$,
  as a sum of lower degree polynomials $h_{k-1}(\cdot)$.
\end{remark}

\newpage
\setcounter{equation}{0}
\section{\label{sec:KPLemmas}Useful lemmas by Kwon and Philippou}
For ease of reference, we state Lemma 1 by Kwon and Philippou \cite{KwonPhilippou}, because it will be useful below.
\begin{lemma}(restatement of Lemma 1 in \cite{KwonPhilippou})
For $2 \le n \le k$ and a fixed $\lambda>0$,
\bq
\label{eq:KP_Lemma1}
\lambda \le h_k(n-1;\lambda) < h_k(n;\lambda) \,.
\eq
\end{lemma}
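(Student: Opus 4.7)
The plan is to reduce via Proposition~\ref{prop:hk_same_k_ge_n} to the case $k=n$, and then handle the two inequalities separately. Since $2\le n\le k$ forces $k\ge n$ and $k>n-1$, Proposition~\ref{prop:hk_same_k_ge_n} gives $h_k(n;\lambda)=h_n(n;\lambda)$ and $h_k(n-1;\lambda)=h_{n-1}(n-1;\lambda)$, so it suffices to prove, for every $n\ge 2$,
\[
\lambda\le h_{n-1}(n-1;\lambda) < h_n(n;\lambda).
\]

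The left inequality is immediate from \eqref{eq:pmf_Poisson_order_k}: the tuple $(n_1,\ldots,n_{n-1})=(0,\ldots,0,1)$ is admissible (its weighted sum equals $n-1$) and contributes the monomial $\lambda$, while every other admissible tuple contributes a non-negative monomial in $\lambda$.

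For the strict inequality I would set $a_n:=h_n(n;\lambda)$ and derive a three-term recurrence by a generating-function argument. Recognising the Poisson distribution of order $k$ as the law of $X_1+2X_2+\cdots+kX_k$ with independent Poisson$(\lambda)$ summands (or by direct rearrangement of \eqref{eq:pmf_Poisson_order_k}), one obtains
\[
\sum_{n\ge 0} h_k(n;\lambda)\,z^n \;=\; \exp\bigl(\lambda(z+z^2+\cdots+z^k)\bigr).
\]
Since monomials $z^j$ with $j>n$ do not affect $[z^n]$, the value $a_n$ is also the coefficient of $z^n$ in $\phi(z):=\exp(\lambda z/(1-z))$. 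The identity $(1-z)^2\phi'(z)=\lambda\phi(z)$, verified by direct differentiation, yields, on equating coefficients of $z^{n-1}$, the recurrence
\[
n(a_n-a_{n-1}) \;=\; (n-2)(a_{n-1}-a_{n-2}) + \lambda\,a_{n-1} \qquad (n\ge 2).
\]
Induction on $n$ then finishes the job: the base case $n=2$ gives $2(a_2-a_1)=\lambda a_1=\lambda^2>0$, and for $n\ge 3$ the inductive hypothesis $a_{n-1}\ge a_{n-2}$ makes the first right-hand term non-negative while $\lambda a_{n-1}>0$, so $a_n>a_{n-1}$.

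The main obstacle will be the strict monotonicity. A naive term-by-term bijection between the tuples defining $h_{n-1}(n-1;\lambda)$ and those defining $h_n(n;\lambda)$ fails: the natural map that increments $n_1$ scales each monomial by a factor $\lambda/(n_1+1)$, which is $<1$ when $\lambda$ is small. A global argument such as the generating-function recurrence above (or equivalent manipulation of the recurrence \eqref{eq:rec_poly}) therefore seems necessary.
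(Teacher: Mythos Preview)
The paper does not supply its own proof of this lemma: it is explicitly a \emph{restatement} of Lemma~1 from Kwon and Philippou \cite{KwonPhilippou}, quoted for later use, with no argument given in the present text. So there is no in-paper proof to compare against.

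That said, your argument is correct and self-contained. The reduction via Proposition~\ref{prop:hk_same_k_ge_n} is valid (since $k\ge n$ and $k>n-1$ both hold when $2\le n\le k$), and the lower bound $\lambda\le h_{n-1}(n-1;\lambda)$ is indeed immediate from the tuple $(0,\dots,0,1)$. For the strict inequality, the generating-function identification $a_n=[z^n]\exp\bigl(\lambda z/(1-z)\bigr)$ is justified because parts exceeding $n$ cannot occur in a composition of $n$, and the differential relation $(1-z)^2\phi'(z)=\lambda\phi(z)$ does yield, at the level of formal power series, the three-term recurrence
\[
n(a_n-a_{n-1})=(n-2)(a_{n-1}-a_{n-2})+\lambda a_{n-1}.
\]
Your induction then closes cleanly: the base case $n=2$ gives $2(a_2-a_1)=\lambda a_1=\lambda^2>0$, and for $n\ge3$ positivity of $a_{n-1}$ (all coefficients positive, $\lambda>0$) together with the inductive hypothesis forces $a_n>a_{n-1}$. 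The closing paragraph explaining why a naive bijection fails is a nice diagnostic remark, though not part of the proof proper.
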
  
\noindent
We can express eq.~\eqref{eq:KP_Lemma1} as the following increasing sequence, which is in fact more useful
\bq  
\label{eq:KP_Lemma1_inc_seq}
\lambda = h_k(1;\lambda) < h_k(2;\lambda) < \dots < h_k(k;\lambda) \,.
\eq
Kwon and Philippou \cite{KwonPhilippou} defined $r_k$ as the positive root of the equation $h_k(k,\lambda)=1$.
They proved that $r_k$ is unique and $r_k\in(0,1)$.
We supply a proof which uses only the Intermediate Value Theorem and no calculus.
\begin{lemma}(Lemma 2 in \cite{KwonPhilippou})
For $k\ge2$ and $0<\lambda<1$, the equation $h_k(k;\lambda) = h_k(0;\lambda)$ has exactly one root $\lambda=r_k\;(0<r_k<1)$ such that
\bq
\label{eq:KP_Lemma2_ineq}
  \left\{
  \begin{array}{lll}
    h_k(0;\lambda) > h_k(k;\lambda)\, &\qquad & \mathit{if}\; 0<\lambda<r_k \\
    h_k(0;\lambda) < h_k(k;\lambda)\, &\qquad & \mathit{if}\; r_k<\lambda<1\,.
  \end{array}
  \right.
\eq
\end{lemma}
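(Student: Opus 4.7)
The plan is to avoid calculus entirely and rely only on the polynomial structure of $h_k(k;\lambda)$ together with the Intermediate Value Theorem. From item~2 of the list preceding Proposition~\ref{prop:hk_same_k_ge_n}, the polynomial $h_k(k;\lambda)$ has no constant term, so $h_k(k;0)=0$; from item~4, $h_k(k;\lambda)$ is continuous (being a polynomial) and strictly increasing for $\lambda\ge 0$. Combined with $h_k(0;\lambda)\equiv 1$, this already gives $h_k(k;0) < h_k(0;0)$, i.e.\ the difference $h_k(k;\lambda)-1$ is negative at the left endpoint of $(0,1)$.

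The crucial step, and the only one where there is anything to show, is $h_k(k;1) > 1$. This is where the Kwon--Philippou increasing sequence~\eqref{eq:KP_Lemma1_inc_seq} does all the work: since $h_k(1;\lambda)=\lambda$ identically (the only tuple with $n_1+2n_2+\dots+kn_k=1$ is $(1,0,\dots,0)$), evaluating~\eqref{eq:KP_Lemma1_inc_seq} at $\lambda=1$ yields $1 = h_k(1;1) < h_k(2;1) < \dots < h_k(k;1)$ for $k\ge 2$, so $h_k(k;1) > 1 = h_k(0;1)$. No estimate on the coefficients of $h_k(k;\lambda)$ is required, and in particular no derivative is taken.

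Having bracketed $h_k(k;\lambda)-1$ between a negative value at $\lambda=0$ and a positive value at $\lambda=1$, the Intermediate Value Theorem supplies at least one root $r_k\in(0,1)$ of $h_k(k;\lambda)=1$. Strict monotonicity of $h_k(k;\lambda)$ on $[0,\infty)$ then delivers both uniqueness of $r_k$ in $(0,1)$ and the sign pattern~\eqref{eq:KP_Lemma2_ineq}: $h_k(k;\lambda)<1$ for $0<\lambda<r_k$ and $h_k(k;\lambda)>1$ for $r_k<\lambda<1$. The only conceptual obstacle is the upper-endpoint bound $h_k(k;1)>1$, and that reduces to a one-line application of the previous lemma; everything else is mechanical.
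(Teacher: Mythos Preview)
Your proof is correct and follows the same overall scheme as the paper: $h_k(k;0)=0$, $h_k(k;1)>1$, apply the Intermediate Value Theorem, then use strict monotonicity of $h_k(k;\lambda)$ for uniqueness and the sign pattern~\eqref{eq:KP_Lemma2_ineq}. The one genuine difference is in how you obtain the upper-endpoint bound $h_k(k;1)>1$. You invoke Lemma~1 (the increasing sequence~\eqref{eq:KP_Lemma1_inc_seq}) at $\lambda=1$, starting from $h_k(1;1)=1$; the paper instead reads off directly from eq.~\eqref{eq:pmf_Poisson_order_k} that the tuple $(0,\dots,0,1)$ contributes the term $\lambda$ to $h_k(k;\lambda)$, so $h_k(k;\lambda)=\lambda+(\textrm{positive terms})$ and hence $h_k(k;1)>1$. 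Your route is cleaner in that it recycles an existing lemma and avoids looking inside the polynomial, at the cost of a logical dependence on Lemma~1; the paper's route is self-contained and makes the inequality explicit from the pmf itself. Either way the argument is two lines.
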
  
\begin{proof}
Since $h_k(0;\lambda)=1$, the equation to solve is $h_k(n;\lambda)=1$.
Also (because $k>0$) $h_k(k;\lambda)$ has no constant term hence $h_k(k;0)=0$.
Next note that the sum $n_1+2n_2+\dots+kn_k=k$ contains a tuple $(n_1,\dots,n_k)=(0,\dots,0,1)$, i.e.~$n_k=1$ and the other $n_i$ are zero.
The corresponding term in $h_k(k;\lambda)$ is simply $\lambda^1/1!$, i.e.~$\lambda$ (and it is the {\em only} term containing $n_k$).
Hence $h_k(k;\lambda) = \lambda +$(positive terms) for $\lambda>0$.
Set $\lambda=1$ to deduce $h_k(k;1)>1$.
Hence by the Intermediate Value Theorem, the equation $h_k(k;\lambda)=1$ has a root $r_k$ in the interval $r_k\in(0,1)$.
Lastly (also because $k>0$) $h_k(k;\lambda)$ is strictly increasing in $\lambda$ for $\lambda \ge 0$,
hence the root $r_k$ is unique and the inequalities in eq.~\eqref{eq:KP_Lemma2_ineq} also follow.
\end{proof}
\noindent
Also for ease of reference, we state Lemma 3 by Kwon and Philippou \cite{KwonPhilippou}, because it will be useful below.
\begin{lemma}(restatement of Lemma 3 in \cite{KwonPhilippou})
For $k \ge 2$ and $0 < \lambda \le r_k < 1$,
\bq
\label{eq:KP_Lemma3}
h_k(k;\lambda) > h_k(k+1;\lambda) \,.
\eq
\end{lemma}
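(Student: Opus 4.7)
The plan is to derive and apply a Panjer-style first-order recurrence for the polynomials $h_k(n;\lambda)$, and then combine the instances at $n=k$ and $n=k+1$.

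First I would establish the identity
\begin{equation*}
n\, h_k(n;\lambda) \;=\; \lambda \sum_{j=1}^{\min(k,n)} j\, h_k(n-j;\lambda)\,, \qquad n\ge 1\,.
\end{equation*}
This follows directly from eq.~\eqref{eq:pmf_Poisson_order_k} by a short combinatorial argument: multiply $h_k(n;\lambda)$ by $n = n_1 + 2n_2 + \cdots + kn_k$, interchange the order of summation, and for each $j$ reindex the inner sum by $n_j' = n_j - 1$; the factor $n_j/n_j! = 1/n_j'!$ absorbs cleanly and the remaining tuple sum is exactly $h_k(n-j;\lambda)$. Equivalently, this is the standard Panjer recurrence obtained from logarithmic differentiation of the probability generating function $G_k(s) = \exp(\lambda(s + s^2 + \cdots + s^k - k))$.

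Next I would specialize the recurrence at $n=k$ and $n=k+1$, use $h_k(0;\lambda)=1$, and reindex both sums so they run over the common range $i = 1,\ldots,k-1$. The coefficient of $h_k(i;\lambda)$ in the $n=k+1$ equation is $\lambda(k+1-i)$, while in the $n=k$ equation it is $\lambda(k-i)$; these differ by exactly $\lambda$, so upon subtraction the cross terms telescope and one obtains the clean identity
\begin{equation*}
(k+1)\, h_k(k+1;\lambda) \;-\; k\, h_k(k;\lambda) \;=\; \lambda\,\Bigl[\,h_k(1;\lambda) + h_k(2;\lambda) + \cdots + h_k(k;\lambda) - k\,\Bigr]\,.
\end{equation*}

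Finally I would bound the bracket using the two available inputs. The hypothesis $\lambda \le r_k$ combined with Lemma~2 gives $h_k(k;\lambda) \le h_k(0;\lambda) = 1$, and the increasing chain in eq.~\eqref{eq:KP_Lemma1_inc_seq} then forces $h_k(i;\lambda) \le h_k(k;\lambda) \le 1$ for every $i = 1,\ldots,k$. Hence the bracket is nonpositive, giving $(k+1)\,h_k(k+1;\lambda) \le k\,h_k(k;\lambda)$. Since $h_k(k;\lambda) > 0$ for $\lambda > 0$, dividing through yields $h_k(k+1;\lambda) \le \tfrac{k}{k+1}\, h_k(k;\lambda) < h_k(k;\lambda)$, which is the stated inequality. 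The main obstacle is the first step: cleanly establishing the Panjer-type recurrence from the combinatorial definition. Once that bookkeeping is done, the remainder of the argument collapses to a one-line estimate, and in fact one obtains the strictly stronger quantitative bound $h_k(k+1;\lambda) \le \tfrac{k}{k+1}\, h_k(k;\lambda)$ for all $\lambda$ in the allowed range.
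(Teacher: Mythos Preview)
Your argument is correct. The Panjer-type recurrence is valid and your derivation of the identity
\[
(k+1)\,h_k(k+1;\lambda) - k\,h_k(k;\lambda) = \lambda\Bigl[\sum_{i=1}^k h_k(i;\lambda) - k\Bigr]
\]
checks out term by term after the reindexing $i=k-j$ and $i=k+1-j$ in the two instances of the recurrence. The bounding step is also fine: for $0<\lambda\le r_k$ Lemma~2 gives $h_k(k;\lambda)\le 1$, the chain of Lemma~1 then forces every summand to be at most $1$, and in fact $h_k(1;\lambda)=\lambda\le r_k<1$ is strictly less than $1$, so the bracket is strictly negative and the conclusion follows even without appealing to the factor $k/(k+1)$.

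As for comparison with the paper: the paper does not supply a proof of this lemma at all. It is listed only as a restatement of Lemma~3 from Kwon and Philippou, quoted for later use; the short proof that follows it in the text belongs to the subsequent Remark about the lower bound $r_k\ge 1/\kappa$, not to the lemma. So your proposal is not merely an alternative route but a self-contained proof where the paper offers none, and it delivers the sharper quantitative statement $h_k(k+1;\lambda)\le \tfrac{k}{k+1}\,h_k(k;\lambda)$ as a bonus.
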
  
\begin{remark}  
  We can improve the lower bound on $r_k$ as follows: $r_k \in [1/\kappa,1)$.
\end{remark}  
\begin{proof}
  Philippou \cite{PhilippouFibQ} proved that the Poisson distribution of order $k$ has a unique mode of zero if $\lambda < 1/\kappa$.
  But if $\lambda=r_k$ then $h_k(k;\lambda)=1$ and zero is not a unique mode.
  Hence $r_k \ge 1/\kappa$.
\end{proof}  
\begin{remark}  
  Combining eqs.~\eqref{eq:KP_Lemma1_inc_seq} and \eqref{eq:KP_Lemma3} reveals that if $0 < \lambda \le r_k$,
  then the value of $h_k(k;\lambda)$ is a local maximum.
  This does {\em not} imply the value $n=k$ is a mode, which requires a {\em global} maximum.
\end{remark}  
\noindent
A histogram plot of $h_k(n;\lambda)$ for the Poisson distribution of order $50$ is displayed in Fig.~\ref{fig:hist50}.
Observe that $h_k(n;\lambda)$ increases for $1 \le n \le k (=50)$.
This is the increasing sequence of Lemma 1 by Kwon and Philippou \cite{KwonPhilippou} (see eq.~\eqref{eq:KP_Lemma1_inc_seq}).
Here $\lambda \simeq 0.10194$, which is small enough to satisfy the requirement of Lemma 3 in \cite{KwonPhilippou}.
The histogram indeed dips at $n=k+1=51$, in agreement with eq.~\eqref{eq:KP_Lemma3}.
Observe that $n=k=50$ is the location of a local but not a global maximum.
For values $n>k$, the structure of the histogram is complicated.
There is another local maximum at $n=98$ but its height is $0.9835\dots$ and is not a global maximum.
The above value of $\lambda$ yields the first double mode for $k=50$.
The (joint) global maximum is at $n=0$ and $113$, where the heights equal unity.

\newpage
\setcounter{equation}{0}
\section{\label{sec:firstdoublemode}First double mode}
\subsection{\label{sec:double_mode_notation}Notation for first double mode}
To avoid confusion below about the use of $m$ when discussing the mode,
let $k\ge2$ and denote the value of the first double mode by $\hat{m}_k$
and the corresponding value of $\lambda$ by $\hat\lambda_k$.
Then by definition $\hat{m}_k>0$ and
\bq
\label{eq:k_hat_m_hat_lam}
h_k(\hat{m}_k,\hat\lambda_k) = 1 \,.
\eq
We wish to prove that if $k_2>k_1$ then (i) $\hat{m}_{k_2}>\hat{m}_{k_1}$ and (ii) $\hat\lambda_{k_2}<\hat\lambda_{k_1}$.
For $k=1$ it is known that $\hat{m}_1=1$ and $\hat\lambda_1=1$.

\subsection{\label{sec:mode_ge_k}Sharp lower bound for the location of the first double mode}
\begin{proposition}
  For any $k\ge2$, if the first double modes are located at $0$ and $m$, then $m\ge k$ (which is a sharp lower bound).
\end{proposition}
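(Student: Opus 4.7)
The plan is to argue by contradiction, using the increasing sequence in equation~\eqref{eq:KP_Lemma1_inc_seq} from the Kwon--Philippou Lemma~1. The whole point of that lemma is that, for any positive $\lambda$, the values $h_k(n;\lambda)$ strictly increase as $n$ runs from $1$ up to $k$. So if the first ``upward'' mode were located anywhere strictly below $k$, the histogram would already be strictly higher at $n=k$ than at the supposed mode location, and that would prevent $0$ from still being a (joint) global maximum.

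More concretely: suppose for contradiction that $\hat m_k = m$ with $1 \le m \le k-1$. By definition of the first double mode, $h_k(0;\hat\lambda_k)=h_k(m;\hat\lambda_k)=1$ and $h_k(n;\hat\lambda_k) \le 1$ for every $n$, so that $0$ and $m$ are both global maxima. Apply eq.~\eqref{eq:KP_Lemma1_inc_seq} at $\lambda = \hat\lambda_k > 0$: since $1\le m < k$, the strict inequality $h_k(m;\hat\lambda_k) < h_k(k;\hat\lambda_k)$ holds. Substituting $h_k(m;\hat\lambda_k)=1$ yields $h_k(k;\hat\lambda_k) > 1 = h_k(0;\hat\lambda_k)$, which contradicts the requirement that $n=0$ is a global maximum. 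Hence $\hat m_k \ge k$.

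Sharpness can be exhibited by pointing to $k=2$. A direct computation from eq.~\eqref{eq:pmf_Poisson_order_k} gives $h_2(2;\lambda) = \lambda + \lambda^2/2$, and setting this equal to $1$ recovers precisely Philippou's value $\hat\lambda_2 = \sqrt{3}-1$, with $\hat m_2 = 2 = k$. Thus the lower bound cannot be improved for general $k\ge2$.

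The argument is short and has no real obstacle; the only thing worth being careful about is the verification that the proposed contradiction actually contradicts the \emph{global} (not merely local) maximum requirement of a mode, which is why the strict inequality in eq.~\eqref{eq:KP_Lemma1_inc_seq} (rather than some local comparison) is essential. The remark after Lemma~3 of Kwon--Philippou makes this distinction explicit and is what legitimises using $h_k(k;\lambda) > 1$ as an immediate disqualification of $0$ as a mode.
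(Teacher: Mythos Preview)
Your proof is correct and follows essentially the same approach as the paper: both argue by contradiction using Kwon--Philippou Lemma~1 (eq.~\eqref{eq:KP_Lemma1_inc_seq}) to show that if $m<k$ satisfied $h_k(m;\hat\lambda_k)=1$, then some larger index would exceed $1$, violating the global-maximum requirement of a mode. Your version is slightly more direct --- you jump straight to $h_k(k;\hat\lambda_k)>1$ and contradict the mode condition at $0$ --- whereas the paper steps to $h_k(\tilde m+1;\tilde\lambda)>1$ and then phrases the contradiction in terms of $\tilde\lambda$ not being the \emph{smallest} $\lambda$; but the key idea and the tool are the same.
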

\begin{proof}
  We argue by contradiction.
  Fix $k\ge2$ and suppose the first double modes are located $0$ and $\tilde{m}$, where $\tilde{m} < k$, and that $\lambda=\tilde\lambda$.
  By definition $h_k(\tilde{m};\tilde\lambda) = 1$.
  But Lemma 1 by Kwon and Philippou \cite{KwonPhilippou} then tells us that $h_k(\tilde{m}+1;\tilde\lambda) > 1$.
  {\em Hence there is a smaller value $\hat\lambda<\tilde\lambda$ such that $h_k(\tilde{m}+1;\hat\lambda) = 1$.}
  Hence $\tilde\lambda$ is {\em not} the smallest value of $\lambda$ to attain a double mode, and $\tilde{m}$ is {\em not} the location of the first double mode.
  Furthermore, we know from Philippou \cite{PhilippouFibQ} that for $k=2$,
  the locations of the first double modes are at $0$ and $2$, i.e.~$m=k$ for $k=2$ (and $\lambda=\sqrt{3}-1$).
  Hence the lower bound $m=k$ is attained, i.e.~it is a sharp lower bound.
\end{proof}

\subsection{\label{sec:bound_on_hat_lam_k}Bounds on $\hat\lambda_k$}
\begin{proposition}\label{prop:hatlam_lt_1}
    For all $k\ge2$, $\hat\lambda_k\in[1/\kappa,r_k]$.
\end{proposition}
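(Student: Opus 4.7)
The plan is to prove the two inequalities $\hat\lambda_k\ge 1/\kappa$ and $\hat\lambda_k\le r_k$ separately. The first is essentially immediate from Philippou's earlier uniqueness result, while the second rests on a continuity / Intermediate Value argument built around the polynomials $h_k(n;\lambda)$.

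For the lower bound I would simply invoke the result of Philippou \cite{PhilippouFibQ} recalled in the Remark after Lemma~3: whenever $\lambda<1/\kappa$, the Poisson distribution of order $k$ has a unique mode at $0$. Since a double mode is impossible in that regime, $\hat\lambda_k\ge 1/\kappa$ follows directly from the definition of $\hat\lambda_k$.

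For the upper bound, set
\bq
M_k(\lambda):=\sup_{n\ge 1}\,h_k(n;\lambda).
\eq
Because $h_k(0;\lambda)\equiv 1$, the pmf has a global maximum shared between $0$ and some $m>0$ precisely when $M_k(\lambda)=1$: the case $M_k(\lambda)<1$ makes $0$ the unique mode, while $M_k(\lambda)>1$ excludes $0$ from the set of modes altogether. Hence $\hat\lambda_k$ is the smallest positive $\lambda$ solving $M_k(\lambda)=1$. At $\lambda=0$ every $h_k(n;\lambda)$ with $n\ge 1$ vanishes, so $M_k(0)=0$; by the defining property of $r_k$ we have $h_k(k;r_k)=1$, and hence $M_k(r_k)\ge 1$. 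The Intermediate Value Theorem then furnishes a $\lambda^\star\in(0,r_k]$ with $M_k(\lambda^\star)=1$, so $\hat\lambda_k\le r_k$.

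The only step that is not instantaneous from the definitions is the continuity of $M_k$. I would establish it via Dini's theorem: because $\sum_{n\ge 0}f_k(n;\lambda)=1$ is a constant limit of monotone continuous partial sums, the convergence is uniform on any compact $\lambda$-interval, whence $h_k(n;\lambda)=e^{k\lambda}f_k(n;\lambda)\to 0$ uniformly in $\lambda$ as $n\to\infty$. On a bounded window around $r_k$ the supremum defining $M_k$ is therefore attained at some $n\le N$, so $M_k$ locally coincides with the maximum of finitely many polynomials and is continuous. This uniform-tail estimate is the only genuine piece of work the proof requires; everything else is a direct packaging of lemmas already in hand.
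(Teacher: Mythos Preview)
Your argument is correct. The lower bound is handled exactly as in the paper, by quoting Philippou's result that $0$ is the unique mode whenever $\lambda<1/\kappa$.

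For the upper bound the paper proceeds much more tersely: it simply observes that $h_k(k;r_k)=1$, so at $\lambda=r_k$ zero is no longer the \emph{unique} mode, and since $\hat\lambda_k$ is by definition the smallest $\lambda$ at which a double mode (containing $0$) occurs, one concludes $\hat\lambda_k\le r_k$. This tacitly identifies ``first $\lambda$ at which $0$ ceases to be the unique mode'' with ``first double mode,'' which is justified by the strict monotonicity of every $h_k(n;\cdot)$ recorded in Sec.~\ref{sec:notation}. Your route via $M_k(\lambda)=\sup_{n\ge1}h_k(n;\lambda)$, Dini's theorem, and the Intermediate Value Theorem is a genuinely different packaging: it is heavier, but it has the virtue of making the existence of $\hat\lambda_k$ and the attainment of the supremum explicit rather than implicit. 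If you want something closer in spirit to the paper but fully self-contained, note that once $\hat\lambda_k$ is granted, the upper bound is a one-liner: since $0$ is a mode at $\hat\lambda_k$ we have $h_k(k;\hat\lambda_k)\le 1=h_k(k;r_k)$, and strict monotonicity of $h_k(k;\cdot)$ yields $\hat\lambda_k\le r_k$ immediately.
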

\begin{proof}
{\em Upper bound:}
By definition $h_k(k;r_k)=1$, which means that if $\lambda=r_k$ then the Poisson distribution of order $k$ does not have a unique mode of zero.
Also by definition $\hat\lambda_k$ is the smallest value of $\lambda$ such that a double mode is attained.
Hence $\hat\lambda_k \le r_k$.
\\
{\em Lower bound:}
Philippou \cite{PhilippouFibQ} proved that the Poisson distribution of order $k$ has a unique mode of zero if $\lambda < 1/\kappa$.
But if the distribution has a double mode then zero is not a unique mode.
Hence $\hat\lambda \ge 1/\kappa$.
\end{proof}
\begin{remark}
  Recall also that $\hat{m}_k \ge k$.
  Hence we can further note if $\hat{m}_k = k$, then $\hat\lambda_k=r_k$.
  Philippou \cite{PhilippouFibQ} proved that for $k=2$ the double modes are at $0$ and $2$.
  Hence the upper bound $\hat\lambda_k \le r_k$ is sharp.
  We can write the following sequence of inequalities
\bq
\label{eq:ineq_hatlam_rk}
1/\kappa \le \hat\lambda_k \le r_k < 1 \,.
\eq
\end{remark}

\subsection{Case $\hat{m}_k=k$}
Fix a value $k\ge2$ and by hypothesis $\hat{m}_k = k$.
We know such a case exists because Philippou \cite{PhilippouFibQ} proved that $\hat{m}_2 = 2$ for $k=2$.  
Then because $\hat{m}_{k+1} \ge k+1$, it is immediate that $\hat{m}_{k+1} > \hat{m}_k$.  
Next set $n=\hat{m}_k(=k)$ and invoke Prop.~\ref{prop:hk_same_k_ge_n} to deduce
$h_{k+1}(k;\lambda) = h_k(k;\lambda)$, for all $\lambda>0$.
Next set $\lambda = \hat\lambda_k$.
Then $h_k(k;\hat\lambda_k)=1$ hence
$h_{k+1}(k;\hat\lambda_k) = 1$.
However, Lemma 1 by Kwon and Philippou \cite{KwonPhilippou} then tells us that
$h_{k+1}(k+1;\hat\lambda_k) > h_{k+1}(k;\hat\lambda_k)$, hence
$h_{k+1}(k+1;\hat\lambda_k) > 1$.
Hence the value $\hat\lambda_{k+1}$ which yields the first double mode for $h_{k+1}$ is a strictly smaller number: $\hat\lambda_{k+1} < \hat\lambda_k$.
This establishes that if $\hat{m}_k=k$, then (i) $\hat{m}_{k+1}>\hat{m}_k$ and (ii) $\hat\lambda_{k+1} < \hat\lambda_k$.

\subsection{Case $\hat{m}_k>k$}
Fix a value $k \ge 2$ and by hypothesis $\hat{m}_k>k$.
Such a case is known to exist, e.g.~$k=15$ \cite{Mane_Poisson_k_CC23_3}.
Set $x=\hat{m}_k$ in eq.~\eqref{eq:hk_inc_seq_k_le_n} in Prop.~\ref{prop:hk_inc_seq_k_le_n} to deduce
$h_{k+1}(\hat{m}_k;\lambda) > h_k(\hat{m}_k;\lambda)$.
Next set $\lambda=\hat\lambda_k$, so $h_k(\hat{m}_k;\hat\lambda_k)=1$.
Then we deduce $h_{k+1}(\hat{m}_k;\hat\lambda_k) > 1$.
Next comes a key step:
to attain a double mode for $h_{k+1}(\cdot)$, we must have $h_{k+1}(n;\lambda) \le 1$ for {\em all} values $n > 0$.
The value $\hat\lambda_k$ is too large to be the smallest value of $\lambda$ to attain a double mode for $h_{k+1}(\cdot)$.
The value $\hat\lambda_{k+1}$ which yields the first double mode for $h_{k+1}(\cdot)$ is a strictly smaller number, i.e.~$\hat\lambda_{k+1} < \hat\lambda_k$.

To prove $\hat{m}_{k+1} > \hat{m}_k$ is more complicated and will be left for future work.
We sketch some of the complications.
Recall the histogram plot of $h_k(n;\lambda)$ for the Poisson distribution of order $50$ in Fig.~\ref{fig:hist50}.
It exhibits the first double mode at $0$ and $\hat{m}_k=113$ (and $\hat\lambda_k \simeq 0.10194$).
As already noted, the value of $h_k(n;\lambda)$ is increasing for $1 \le n \le k (=50)$
which is the increasing sequence of Lemma 1 by Kwon and Philippou \cite{KwonPhilippou} (see eq.~\eqref{eq:KP_Lemma1_inc_seq}).
However for $n>k$, which is what we need here, the structure of the histogram is complicated.
There is a local maximum at $n=98$ but its height is $0.9835\dots$ and is not a global maximum.
The (joint) global maximum is at $0$ and $113$.
It is difficult to establish an ``increasing sequence'' $\{h_k(n;\lambda),h_k(n+1;\lambda),\dots\}$ as in Lemma 1 by Kwon and Philippou \cite{KwonPhilippou}.
This makes it hard(er) to prove $\hat{m}_{k+1} > \hat{m}_k$ when $\hat{m}_k > k$.

\subsection{Graphs}
For the record, we plot graphs of the location of the first double mode $\hat{m}_k$ as a function of $k$.
Fig.~\ref{fig:mode_first_double_100} displays a plot of $\hat{m}_k$ for $2 \le k \le 100$.
Observe that the graph consists of linear segments, with jumps from one segment to another,
and the slopes of successive segments get steeper (the slope values are $1$, $2$ and $3$).
Basically, the value of $\hat{m}_k$ is nonlinear in $k$, but is also constrained to be an integer,
and this is how it demonstrates that fact.  
Fig.~\ref{fig:mode_first_double_10000} displays a plot of $\hat{m}_k$ for $2 \le k \le 10^4$.
Note that the horizontal axis plots the value of $\kappa$ (divided by $10^6$).
The dashed curve is a fit $2.34\,\kappa^{5/8}$.
It is an approximate asymptotic fit.

\newpage
\setcounter{equation}{0}
\section{\label{sec:notmode}Integers which cannot be values of the mode}
In the last line of their paper, Kwon and Philippou \cite{KwonPhilippou} posed the question: 
``What positive integers cannot be modes of the Poisson distribution of order $k$?''

First we dispose of the case $k=1$, the standard Poisson distribution.
It is well-known that all nonnegative integers $n\ge0$ can be modes of the Poisson distribution.
The question is nontrivial only for $k\ge2$.
Next, since we proved above that the value of the first double mode must be at least $k$ (recall $\hat{m}_k \ge k$),
a partial answer is that for $k\ge2$, integers in the interval $[1,k-1]$ cannot be modes of the Poisson distribution of order $k$.
In particular, the value $1$ is never a mode for any $k\ge2$.

In this section, we give an ``almost complete'' answer to the above question.
We say ``almost complete'' because the answer relies partly on numerical conjectures.
If the conjectures are proved, then the answer below is complete.
One numerical conjecture is this: for $k\ge2$ and $n\ge2\kappa$ and $\lambda=n/\kappa$,
the mode $m_k(\lambda)$ of the Poisson distribution of order $k$ is given by (eq.~(4.1) in \cite{Mane_Poisson_k_CC23_3})
\bq
m_k(n/\kappa) = n - \biggl\lfloor\frac{3k+5}{8}\biggr\rfloor \,.
\eq
The above conjecture implies that any integer 
$i \ge k(k+1) - \lfloor(3k+5)/8\rfloor$ can be a mode of the Poisson distribution of order $k$.
Hence, if the above conjecture is correct, then for any given $k\ge2$
it is only necessary to test integers in the interval $[k,k(k+1) - \lfloor(3k+5)/8\rfloor)$.

We employ the term ``excluded values'' for positive integers which cannot be modes of the Poisson distribution of order $k$.
The value of the mode was calculated numerically for $k\ge2$ and the following results were obtained.
\begin{enumerate}
\item
  For all $k\ge2$, integers in the interval $[1,\hat{m}_k-1]$ are excluded.
\item
  Because $\hat\lambda_k \le r_k$ (see Prop.~\ref{prop:hatlam_lt_1} and eq.~\eqref{eq:ineq_hatlam_rk}),
  Lemma 3 by Kwon and Philippou \cite{KwonPhilippou} tells us that $k+1$ can never be a mode, for any $k\ge2$.
  Numerical calculations confirm this.
\item
  For $2 \le k \le 14$, the set of excluded values is listed in Table~\ref{tb:excluded1}.
  Note that $\hat{m}_k=k$ for $k\in[2,14]$.
\item
  For $15 \le k \le 28$, the set of excluded values is listed in Table~\ref{tb:excluded2}.
  Note that $\hat{m}_k=2k-5$ for $k\in[15,28]$.
\item
  For $29 \le k \le 37$, the set of excluded values is listed in Table~\ref{tb:excluded3}.
  Note that $\hat{m}_k=2k-4$ for $k\in[29,37]$.
\item
  For $38 \le k \le 41$, the set of excluded values is listed in Table~\ref{tb:excluded4}.
  Note that $\hat{m}_k=2k-3$ for $k\in[38,41]$.
\item
  For $k\ge42$, the interval $[1,\hat{m}_k-1]$ is the {\em only} interval of excluded values.
  The number $0$ and all integers $\ge\hat{m}_k$ can be modes of the Poisson distribution of order $k$.
\end{enumerate}
{\em Note that the last statement ``$k\ge42$'' is itself a numerical conjecture.}
All values $42 \le k \le 100$ were tested, and Monte Carlo sampling for values $101 \le k \le 500$. No counterexamples were found.
The computations were too slow to go up to $k=1000$.
Hence if the numerical conjectures are proved, the above is a complete tabulation of the integers which cannot be modes of the Poisson distribution of order $k$.

\newpage
\section{\label{sec:conc}Conclusion}
One of the two foci of this note was the ``first double mode'' by which is meant the first occasion (smallest value of $\lambda$)
the Poisson distribution of order $k\ge2$ is bimodal, with modes at $0$ and $m>0$.
Denoting the corresponding value of $\lambda$ by $\hat\lambda_k$, it was shown that $\hat\lambda_k \in[1/\kappa,r_k]$
where $\kappa=k(k+1)/2$ and $r_k$ is the unique positive root of the equation $h_k(k;r_k)=1$.
It was also shown that the sequence $\{\hat\lambda_k,\,k=2,3,\dots\}$ is strictly decreasing in $k$.
(We can trivially include the mode value $m=1$ with corresponding value $\lambda=1$ for the case $k=1$.)
The corresponding value of $m$ for a double mode is denoted by $\hat{m}_k$.
It was shown that $\hat{m}_k \ge k$.
It was found in \cite{Mane_Poisson_k_CC23_3} that $\hat{m}_k = k$ for $2 \le k \le 14$, but $\hat{m}_k > k$ for $k \ge 15$.
Numerical calculations show that asymptotically $\hat{m}_k \simeq 2.34\,\kappa^{5/8}$ (see Fig.~\ref{fig:mode_first_double_10000}).

Next, Sec.~\ref{sec:notmode} presented an ``almost complete'' answer to the question (posed in \cite{KwonPhilippou})
``which positive integers cannot be modes of the Poisson distribution of order $k$?''
The answer is ``almost complete'' because some parts rely on numerical conjectures.
If the numerical conjectures are proved, the answer is complete.

{\em In closing, the calculations in this note do not rule out the possibility of a triple mode.}
It is possible that there exist integers $0<m_1<m_2$ such that $0$, $m_1$ and $m_2$ are joint modes.
In such a case, $\hat\lambda_k$ would have to be a simultaneous positive real root of the equations $h_k(m_1;\hat\lambda_k)=1$ and $h_k(m_2;\hat\lambda_k)=1$.
Such a possibility was not investigated in this note.
More generally, it is an open question if the Poisson distribution of order $k$ has triple (or quadruple, etc.) modes,
not necessarily including zero as one of the mode values.


\newpage
\section{\label{sec:corrigendum}Corrigendum 9/19/2023}
After submission of the first version of this post, it was realized that Fig.~\ref{fig:mode_first_double_10000} contains an inconsistency.
It was shown in \cite{Mane_Poisson_k_CC23_3} that the value of $\kappa\lambda$, i.e.~the mean, at the location of the first double mode
scales as $k^{9/8}$ (see Fig.~4 in \cite{Mane_Poisson_k_CC23_3}).
According to the fit in Fig.~\ref{fig:mode_first_double_10000}, the location of the first double mode $\hat{m}_k$ scales as $\kappa^{5/8}$.
Since $\kappa=k(k+1)/2$, this implies that the location of the first double mode $\hat{m}_k$ scales as $k^{10/8}$.
{\em However, this is impossible because it was proved in Theorem 2.1 in \cite{PhilippouGeorghiouPhilippou} that the value of the mode never exceeds the mean.}
Note also that Fig.~4 in \cite{Mane_Poisson_k_CC23_3} is a logarithmic plot (which is better suited to estimate exponents of growth rates),
whereas Fig.~\ref{fig:mode_first_double_10000} is not.
{\em The error of this inconsistency in the analysis is regretted.}
Fig.~\ref{fig:mean_mode_first_double_10000} displays a logarithmic plot of the mean (solid curve) and first double mode (dotdash curve) together.
It is seen that the mode is always less than the mean and that asymptotically, they have equal (power law) growth rates.
The dashed line is a fit ($\textrm{constant}\times k^{9/8}$), as in Fig.~4 in \cite{Mane_Poisson_k_CC23_3}.

It is more informative to study the {\em difference} between the mean $\mu$ and the first double mode $\hat{m}_k$.
Fig.~\ref{fig:mean_first_mode_diff_10000} displays a plot of the difference $\mu-\hat{m}_k$ for $2 \le k \le 10^4$ (dashed line)
for the Poisson distribution of order $k$.
The dotted line is a fit $3.0 + 0.38\,k$ and is visually indistinguishable from the data.
The above reults indicate the following.
\begin{enumerate}
\item
  The value of the mode is less than the mean (correct).
\item
  The difference between the mean and first double mode is linear in $k$.
\item
  The values of both the mean and the first double mode scale asymptotically as $k^{9/8}$.
\end{enumerate}
For fixed $k\ge1$ and $\lambda>0$, and denoting the mode by $m_{k,\lambda}$, Theorem 2.1 in \cite{PhilippouGeorghiouPhilippou} states that
\bq
\lfloor \lambda k(k+1)/2\rfloor - \frac{k(k+1)}{2} +1 -\delta_{k,1} \le m_{k,\lambda} \le \lfloor \lambda k(k+1)/2\rfloor \,.
\eq
Given the results in this updated note (and treating $k\ge2$ only to avoid the Kronecker delta),
the lower bound can be sharpened to subtract $O(k)$.
\begin{conjecture}
For fixed $k\ge2$ and $\lambda>0$, an improved lower bound for the mode $m_{k,\lambda}$ is
\bq
\max\{0,\,\lfloor \lambda k(k+1)/2\rfloor - (c_0 + c_1\,k)\} \le m_{k,\lambda} \,.
\eq
Here $c_0$ and $c_1$ are constants to be determined.
\end{conjecture}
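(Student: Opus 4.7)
My approach starts from the standard pgf-derived recurrence
\begin{equation}
n f_k(n;\lambda) = \lambda \sum_{i=1}^{\min(n,k)} i\, f_k(n-i;\lambda),
\end{equation}
which follows from $G(s) = \exp(\lambda(s+s^2+\cdots+s^k-k))$ upon matching coefficients in $G'(s) = G(s)\cdot \lambda(1 + 2s + \cdots + ks^{k-1})$. Setting $r_n = f_k(n;\lambda)/f_k(n-1;\lambda)$, any mode $m$ satisfies $r_m \ge 1 \ge r_{m+1}$, and for $n \ge k$ the recurrence becomes
\begin{equation}
n\, r_n = \lambda \sum_{i=1}^{k} \frac{i}{\prod_{j=1}^{i-1} r_{n-j}},
\end{equation}
with the empty product equal to $1$. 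The strategy is to show that $r_n > 1$ whenever $n \le \lfloor\kappa\lambda\rfloor - (c_0 + c_1 k)$, which forces the global maximum above that threshold.

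First I would perform a local linearization. Substituting $r_{n-j} = 1 + \epsilon$ for all $j$ and using $\sum_{j=1}^k j = \kappa$ together with $\sum_{j=1}^k j(j-1) = k(k+1)(k-1)/3$, the right side expands to $\lambda\kappa - \lambda\,k(k+1)(k-1)\epsilon/3 + O(\epsilon^2)$. Matching to $n(1+\epsilon)$ gives the Gaussian-saddle identity
\begin{equation}
\epsilon \;\approx\; \frac{\mu - n}{\mu + \lambda k(k+1)(k-1)/3} \;=\; \frac{\mu-n}{\sigma^2},
\end{equation}
where $\mu = \kappa\lambda$ and $\sigma^2 = \lambda k(k+1)(2k+1)/6$ is the exact variance. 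At this stage one already obtains the weak statement $|m - \mu| = O(\sigma) = O(k^{3/2}\sqrt{\lambda})$, but not the linear-in-$k$ bound demanded by the conjecture.

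To tighten this to $O(k)$, I would control the denominators $\prod_{j=1}^{i-1} r_{n-j}$ uniformly on a window $[\mu - c_0 - c_1 k,\, \mu]$ of width $O(k)$, rather than on the $O(\sigma)$ window that a naive Gaussian estimate would suggest. The plan is a descending induction on $n$ starting from a value slightly above $\mu$: using Lemma 3 of \cite{KwonPhilippou} and the sequence in eq.~\eqref{eq:KP_Lemma1_inc_seq} as anchoring estimates near the boundary, show that the telescoping products remain within a factor $1 + O(1/k)$ of $1$ throughout the window. Inserting this into the recurrence yields $n r_n \ge \lambda\kappa - O(\lambda k^2)\cdot O(1/k) = \mu - O(\lambda k)$, so $r_n > 1$ as soon as $\mu - n$ exceeds a constant multiple of $k$, which is precisely the desired bound.

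The principal obstacle is that $f_k(n;\lambda)$ is generically not unimodal—Fig.~\ref{fig:hist50} displays a local maximum at $n=98$ of height $0.9835\ldots$ separating two peaks—so individual ratios $r_{n-j}$ can swing away from $1$ and the descending induction must absorb that swing without losing the $O(k)$ gap. A likely workaround is to abandon the global mode and instead prove the bound for the local maximum nearest $\mu$ (easier by the linearized argument plus a sign analysis of $r_n - 1$), then upgrade to the global statement by a log-concavity comparison against the Gaussian saddle-point approximation, using the fact that any competing local maximum far below $\mu$ would have to straddle a region of width $k$ where the above linearization already rules out $r_n \le 1$. Pinning down explicit admissible values of $c_0$ and $c_1$, rather than asymptotic ones, would require a case check for small $k$ and is expected to be the most tedious component of the proof.
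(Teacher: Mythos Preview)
The paper does not prove this statement: it is explicitly presented as a \emph{Conjecture}, motivated solely by the numerical fit $\mu-\hat m_k \approx 3.0 + 0.38\,k$ in Fig.~\ref{fig:mean_first_mode_diff_10000}. There is no analytic argument in the paper against which to compare your proposal, so your outline already goes further than the paper attempts.

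That said, your proposal is a research plan, not a proof, and the gap sits exactly where the difficulty lies. The recurrence and the linearization are correct and standard, and they do give the heuristic $\epsilon\approx(\mu-n)/\sigma^2$; but this tells you the mode is near $\mu$ only to the extent that you can control the \emph{variation} of the $r_{n-j}$ across the window, which is precisely the step you defer to a ``descending induction'' without executing it. The assertion that the telescoping products stay within $1+O(1/k)$ of $1$ over an $O(k)$ window is the entire content of the conjecture restated in different variables; proving it requires an a priori bound on $|r_{n-j}-1|$ that you do not yet have. Your appeal to Lemma~3 of \cite{KwonPhilippou} and to eq.~\eqref{eq:KP_Lemma1_inc_seq} as ``anchoring estimates'' is misplaced: those results hold only for $n\le k+1$ and $0<\lambda\le r_k<1$, whereas here you need control near $n=\mu=\kappa\lambda$ for arbitrary $\lambda>0$, a completely different regime.

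The non-unimodality obstacle you flag is real---the secondary local maximum visible in Fig.~\ref{fig:hist50} shows that $r_n-1$ changes sign more than once---and your proposed workaround (prove the bound for the nearest local maximum, then upgrade via a log-concavity comparison) is circular as stated: the Poisson distribution of order $k$ is \emph{not} log-concave, so you cannot invoke log-concavity to rule out distant competing maxima. A viable route would more likely go through a uniform saddle-point or Edgeworth-type estimate for $f_k(n;\lambda)$ with explicit error terms in $k$, but that is substantial work not hinted at in your sketch.
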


\newpage

\newpage
\begin{figure}[!htb]
\centering
\includegraphics[width=0.75\textwidth]{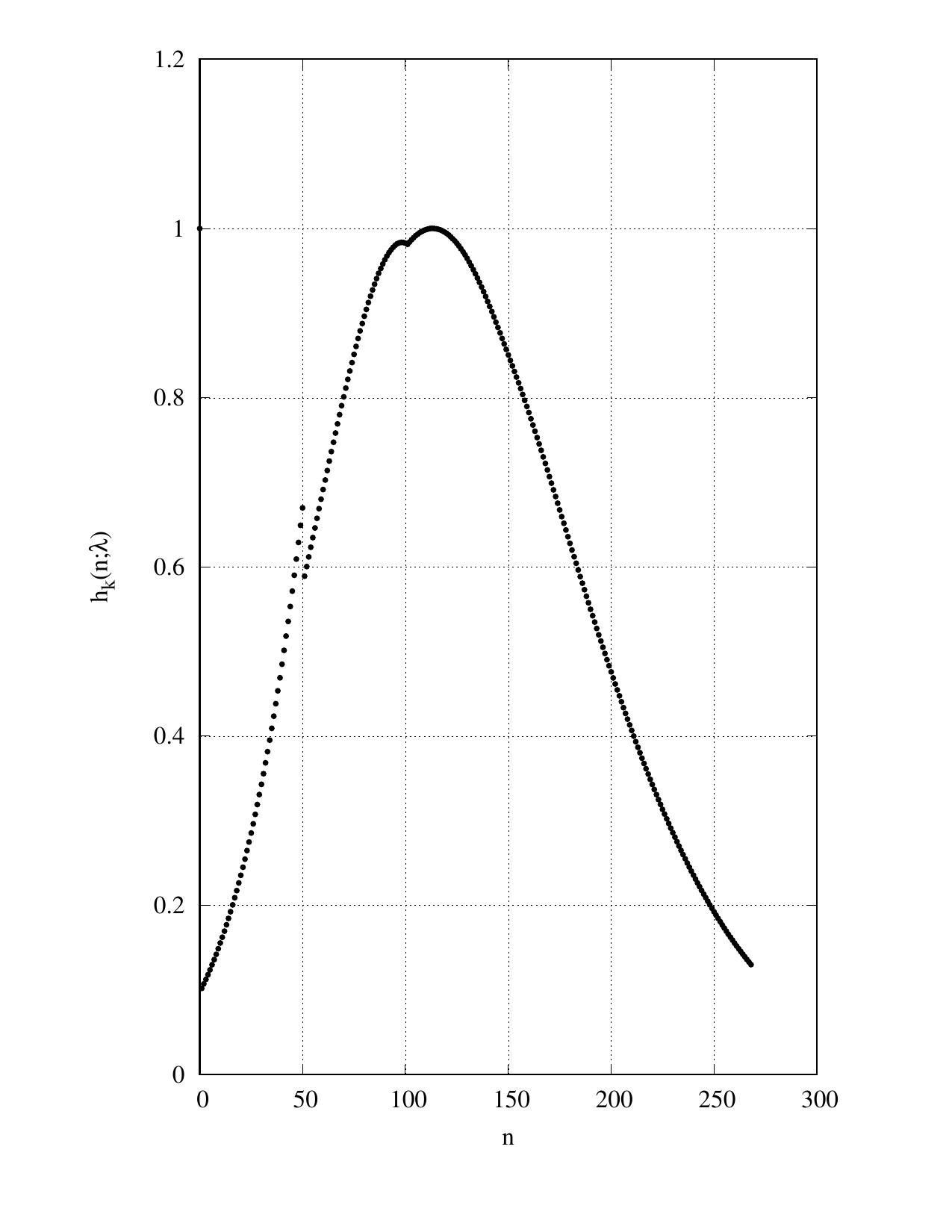}
\caption{\small
\label{fig:hist50}
Histogram plot of $h_k(n;\lambda)$ for the Poisson distribution of order $50$, exhibiting the first double mode (at $n=0$ and $113$).}
\end{figure}

\newpage
\begin{figure}[!htb]
\centering
\includegraphics[width=0.75\textwidth]{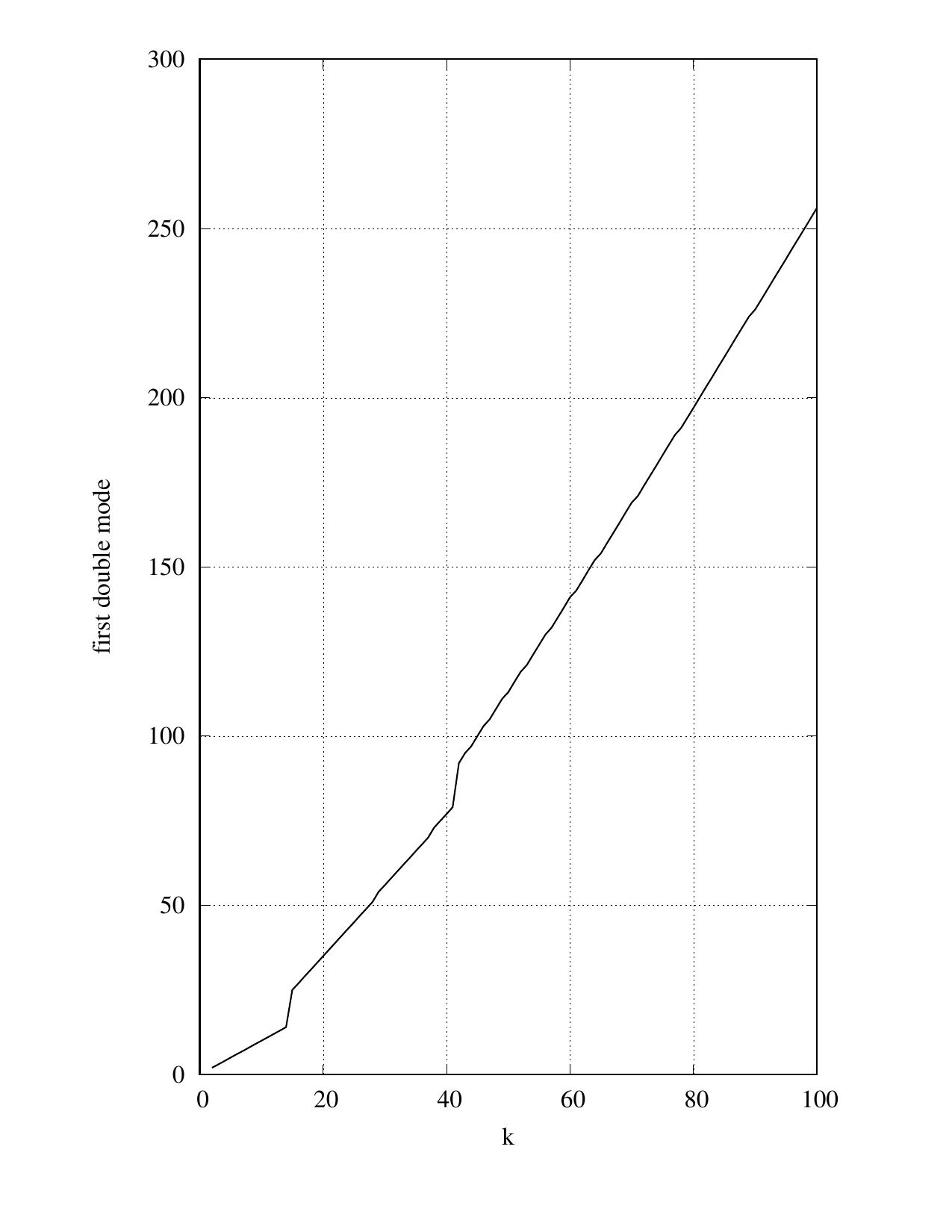}
\caption{\small
\label{fig:mode_first_double_100}
Plot of the location of the first double mode $\hat{m}_k$ for $2 \le k \le 100$.}
\end{figure}

\newpage
\begin{figure}[!htb]
\centering
\includegraphics[width=0.75\textwidth]{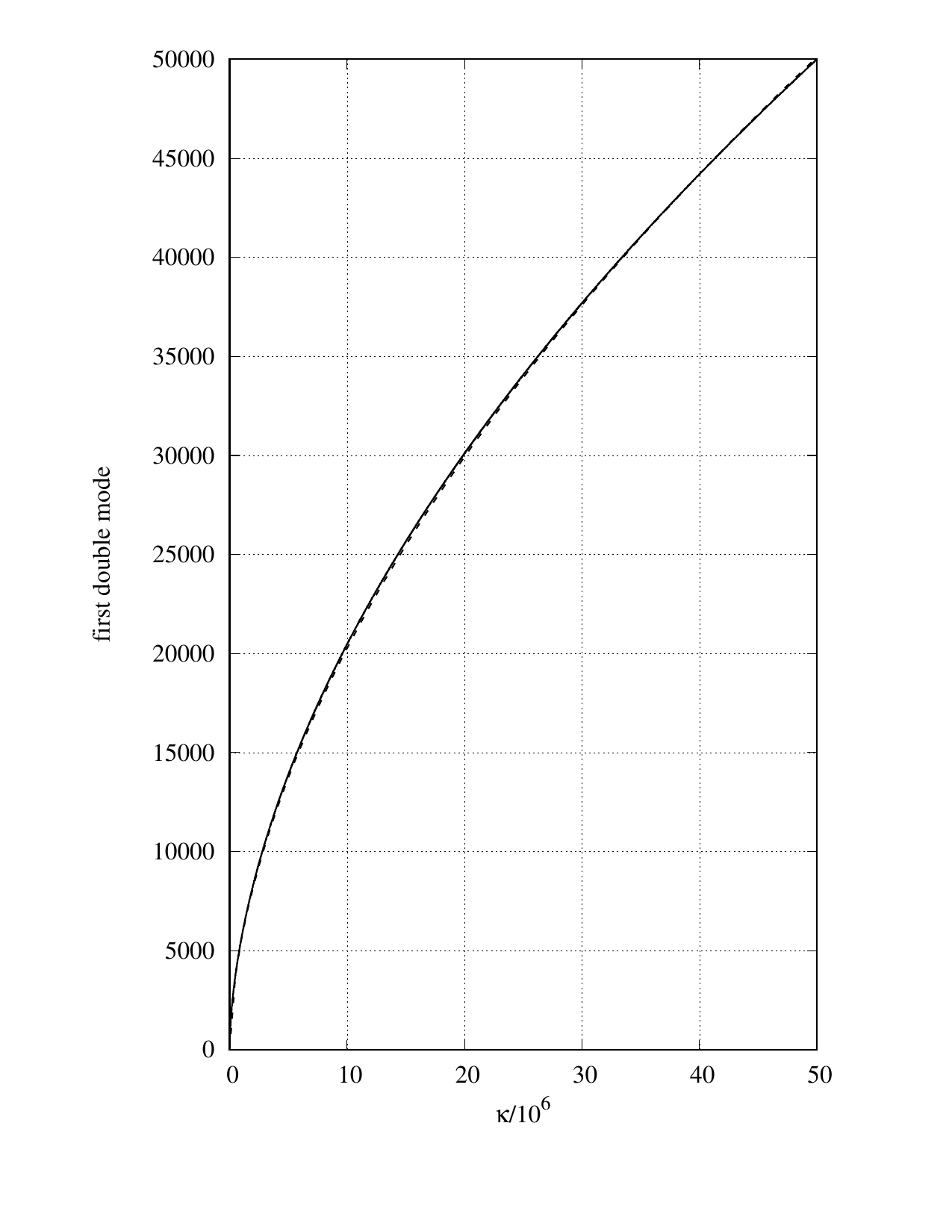}
\caption{\small
\label{fig:mode_first_double_10000}
Plot of the location of the first double mode $\hat{m}_k$ for $2 \le k \le 10^4$.
The horizontal axis shows the value of $\kappa=k(k+1)/2$, divided by $10^6$.
The dashed line is the fit $2.34\,\kappa^{5/8}$.}
\end{figure}

\newpage
\begin{figure}[!htb]
\centering
\includegraphics[width=0.75\textwidth]{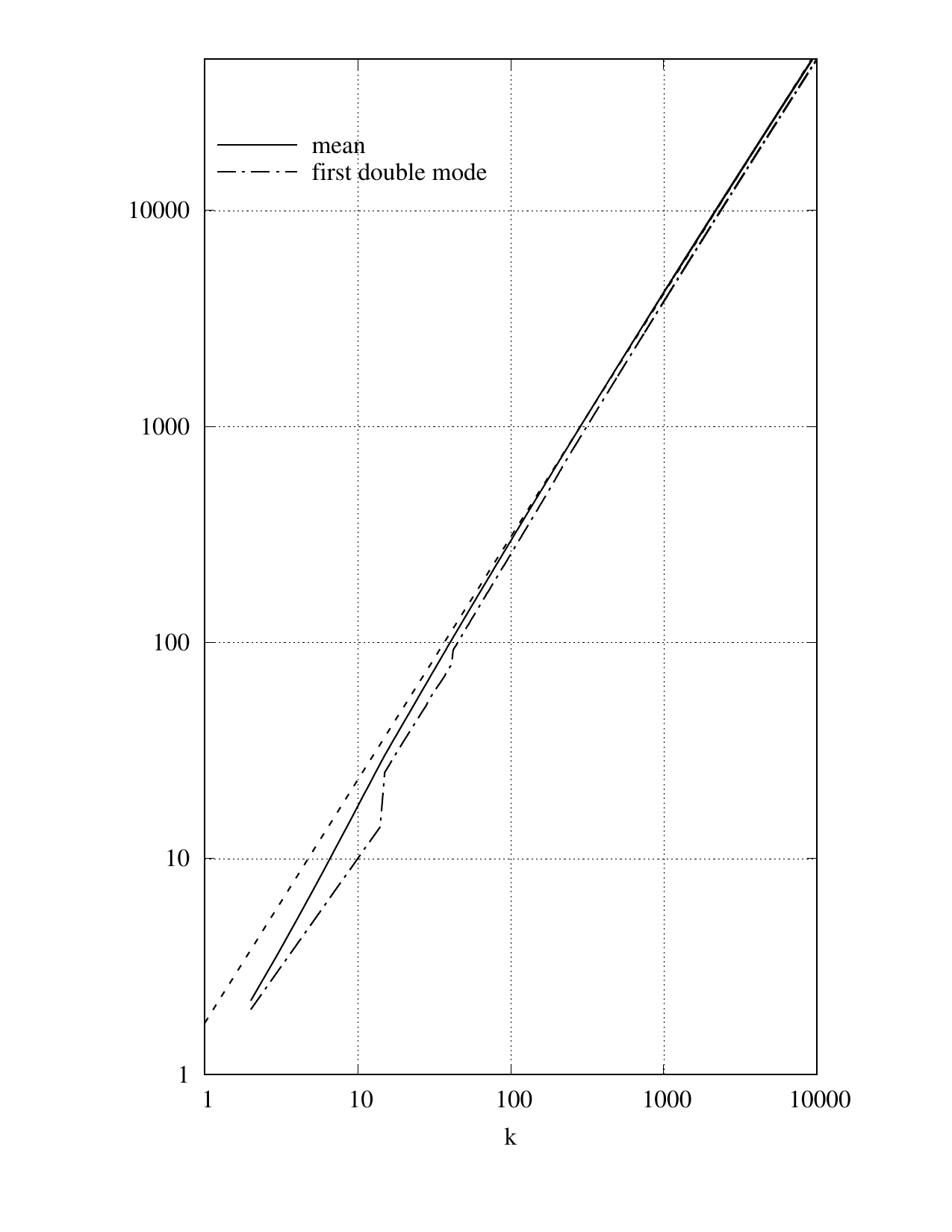}
\caption{\small
\label{fig:mean_mode_first_double_10000}
Logarithmic plot of the value of the mean (solid) and first double mode (dotdash) for the Poisson distribution of order $k$ for $2 \le k \le 10^4$.
The dashed line is a power law fit $\textrm{const}\times k^{1.125}$.}
\end{figure}

\newpage
\begin{figure}[!htb]
\centering
\includegraphics[width=0.75\textwidth]{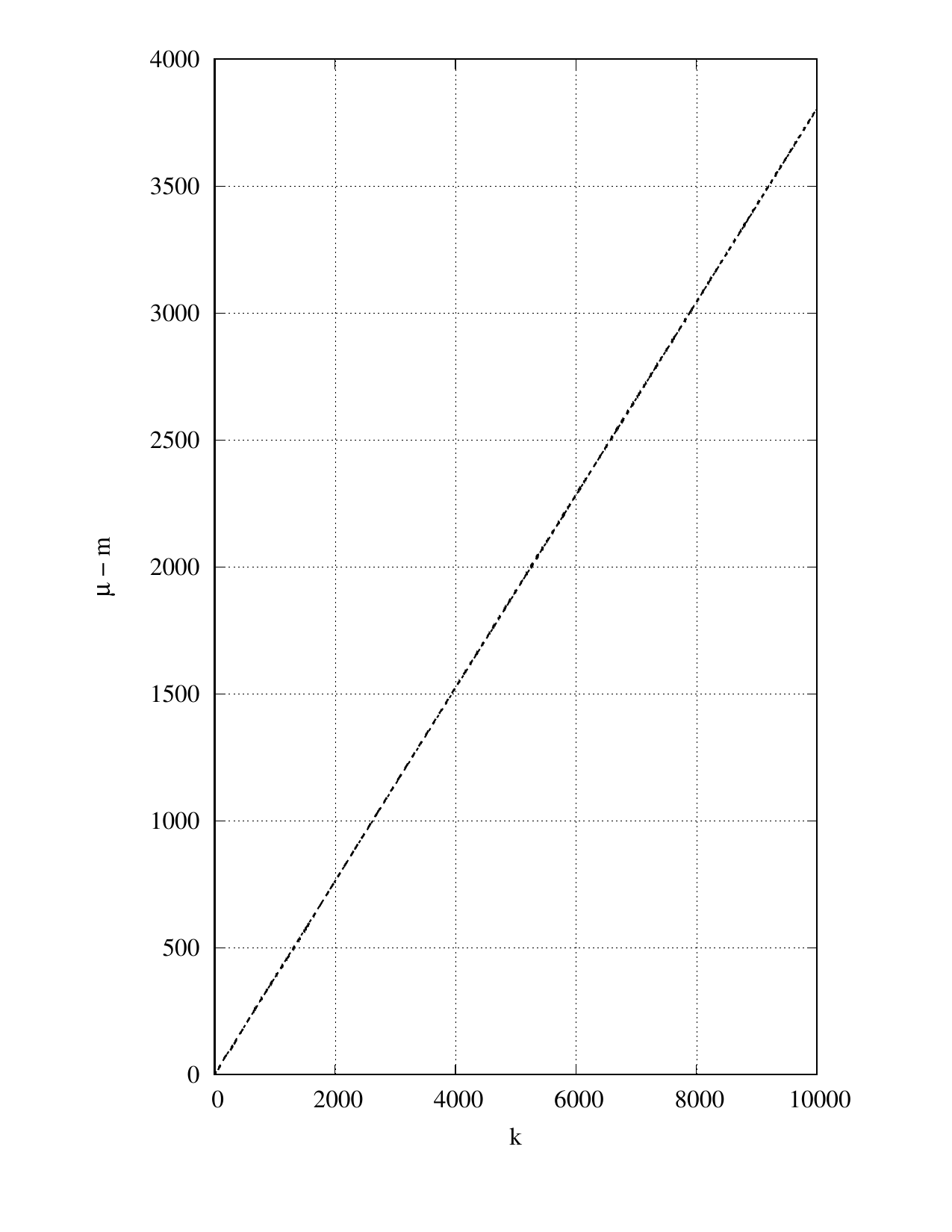}
\caption{\small
\label{fig:mean_first_mode_diff_10000}
Plot of the difference between the mean $\mu$ and the first double mode $\hat{m}_k$ for $2 \le k \le 10^4$ (dashed line)
for the Poisson distribution of order $k$.
The dotted line is a fit $3.0 + 0.38\,k$ (visually indistinguishable).}
\end{figure}

\newpage
\begin{table}[htb]
\centering
\begin{tabular}[width=0.75\textwidth]{|l|l|l|l|}
  \hline
  $k$ & $1^{st}$ interval & $2^{nd}$ interval & $3^{rd}$ interval \\
  \hline
  2 & $[1,k-1]$ & $3$ & \\
  3 & & $4$ & \\
  4 & & $[5,6]$ & 9 \\
  5 & & $[6,8]$ & 11 \\
  6 & & $[7,9]$ & 13 \\
  7 & & $[8,11]$ & 15 \\
  8 & & $[9,13]$ & 17 \\
  9 & & $[10,14]$ & [19,20] \\
  10 & & $[11,16]$ & [21,22] \\
  11 & & $[12,18]$ & [23,24] \\
  12 & & $[13,19]$ & [25,26] \\
  13 & & $[14,21]$ & [27,28] \\
  14 & & $[15,23]$ & [29,30] \\
  \hline
\end{tabular}
\caption{\label{tb:excluded1}
  Tabulation of ``excluded values'' (integers which cannot be modes of the Poisson distribution of order $k$) for $k=2$ through $14$.
  The interval is $[1,k-1]$ in all rows in the first column.}
\end{table}

\newpage
\begin{table}[htb]
\centering
\begin{tabular}[width=0.75\textwidth]{|l|l|l|}
  \hline
  $k$ & $1^{st}$ interval & $2^{nd}$ interval \\
  \hline
  15 & $[1,2k-6]$ & $[30,32]$ \\
  16 & & $[32,35]$ \\
  17 & & $[34,37]$ \\
  18 & & $[36,39]$ \\
  19 & & $[38,41]$ \\
  20 & & $[40,43]$ \\
  21 & & $[42,45]$ \\
  22 & & $[44,48]$ \\
  23 & & $[46,50]$ \\
  24 & & $[48,52]$ \\
  25 & & $[50,54]$ \\
  26 & & $[51,56]$ \\
  27 & & $[53,58]$ \\
  28 & & $[55,61]$ \\
  \hline
\end{tabular}
\caption{\label{tb:excluded2}
  Tabulation of ``excluded values'' (integers which cannot be modes of the Poisson distribution of order $k$) for $k=15$ through $28$.
  The interval is $[1,2k-6]$ in all rows in the first column.}
\end{table}

\newpage
\begin{table}[htb]
\centering
\begin{tabular}[width=0.75\textwidth]{|l|l|l|}
  \hline
  $k$ & $1^{st}$ interval & $2^{nd}$ interval \\
  \hline
  29 & $[1,2k-5]$ & $[57,63]$ \\
  30 & & $[59,65]$ \\
  31 & & $[61,67]$ \\
  32 & & $[63,69]$ \\
  33 & & $[65,71]$ \\
  34 & & $[67,73]$ \\
  35 & & $[69,76]$ \\
  36 & & $[70,78]$ \\
  37 & & $[72,80]$ \\
  \hline
\end{tabular}
\caption{\label{tb:excluded3}
  Tabulation of ``excluded values'' (integers which cannot be modes of the Poisson distribution of order $k$) for $k=29$ through $37$.
  The interval is $[1,2k-5]$ in all rows in the first column.}
\end{table}

\newpage
\begin{table}[htb]
\centering
\begin{tabular}[width=0.75\textwidth]{|l|l|l|}
  \hline
  $k$ & $1^{st}$ interval & $2^{nd}$ interval \\
  \hline
  38 & $[1,2k-4]$ & $[74,82]$ \\
  39 & & $[76,84]$ \\
  40 & & $[78,86]$ \\
  41 & & $[80,89]$ \\
  \hline
\end{tabular}
\caption{\label{tb:excluded4}
  Tabulation of ``excluded values'' (integers which cannot be modes of the Poisson distribution of order $k$) for $k=38$ through $41$.
  The interval is $[1,2k-4]$ in all rows in the first column.}
\end{table}

\end{document}